\providecommand{\keywords}[1]{\textbf{Keywords.} #1}
\providecommand{\MSC}[1]{\textbf{2010 Mathematics Subject Classification.} #1}
\newtheorem{theorem}{Theorem}[section]
\newtheorem{lemma}[theorem]{Lemma}
\newtheorem{example}[theorem]{Example}
\theoremstyle{definition}
\newtheorem{definition}[theorem]{Definition}
\newtheorem{remark}[theorem]{Remark}
\renewcommand\epsilon{\varepsilon}
\renewcommand\mapsto{\longmapsto}
\newcommand{\R}{\field{R}\xspace}
\newcommand{\N}{\field{N}\xspace}
\newcommand{\field}[1]{\ensuremath{\mathbb{#1}}}
\newcommand{\ens}[1]{ \left\{#1\right\} }
\newcommand\diag{\mathrm{diag} \,}
\newcommand{\Tinf}{T_{\mathrm{inf}}}
\newcommand\pt[1]{\frac{\partial #1}{\partial t}}
\newcommand\px[1]{\frac{\partial #1}{\partial x}}
\newcommand\yin{y^{\mathrm{in}}}
\newcommand\yout{y^{\mathrm{out}}}
\newcommand\uin{u^{\mathrm{in}}}
\newcommand{\rank}{\mathrm{rank} \,}
\def\norm#1{\left\|#1\right\|}
\newcommand\abs[1]{\left|#1\right|}
\newcommand\st{\quad \middle| \quad}
\newcommand\supp{\mathrm{supp} \,}
\newcommand{\syst}[2]{
\ifthenelse{\equal{#2}{}}{\left(\Lambda,#1,Q\right)}
{\ifthenelse{\equal{#2}{b}}{\left(\Lambda,-,Q,#1\right)}{}}
{\ifthenelse{\equal{#2}{c}}{\left(\Lambda,-,Q^0,#1\right)}{}}
}
\newcommand\clos[1]{\overline{#1}}
\newcommand\taumax{T^*}
\newcommand\taumaxpeps{T^{**}}
\newcommand\omegahat{{\omega_0}}
\newcommand{\rev}[1]{\widehat{#1}}
\newcommand\concompo{\mathcal{C}}
\newcommand\modif[1]{{{#1}}}
\newcommand{\Tinfbc}{T_{\mathrm{inf}}^{\mathrm{bc}}}
\title{The minimal control time for the exact controllability by internal controls of 1D linear hyperbolic balance laws}
\author{
Long Hu\thanks{School of Mathematics, Shandong University, Jinan, Shandong 250100, China.  E-mail: \texttt{hul@sdu.edu.cn}}
\and
Guillaume Olive\thanks{Faculty of Mathematics and Computer Science, Jagiellonian University, ul. {\L}ojasiewicza 6, 30-348 Krak\'{o}w, Poland. E-mail: \texttt{math.golive@gmail.com} or \texttt{guillaume.olive@uj.edu.pl}}
}
\date{\today}
\begin{document}

\maketitle

\begin{abstract}
In this article we study the internal controllability of 1D linear hyperbolic balance laws when the number of controls is equal to the number of state variables.
The controls are supported in space in an arbitrary open subset.
Our main result is a complete characterization of the minimal control time for the exact controllability property.
\end{abstract}

\keywords{Hyperbolic systems; Minimal control time; Internal controllability}

\vspace{0.2cm}
\MSC{35L40; 93B05}

\tableofcontents

\section{Introduction and main result}

\subsection{Problem description}\label{sect prob desc}

In this paper, we are interested in the controllability properties of a class of one-dimensional (1D) first-order linear hyperbolic systems (see e.g. \cite{BC16} for applications).
The equations of the system are
\begin{subequations}\label{syst}
\begin{equation}\label{syst:equ}
\pt{y}(t,x)+\Lambda(x) \px{y}(t,x)=M(x) y(t,x) +1_\omega(x) u(t,x).
\end{equation}
Above, $t \in (0,T)$ is the time variable, $T>0$, $x \in (0,1)$ is the space variable and the state is $y:(0,T) \times (0,1) \to \R^n$ $(n \geq 2$).
The matrix $\Lambda \in C^{0,1}([0,1])^{n \times n}$ will always be assumed diagonal $\Lambda =\diag(\lambda_1,\ldots,\lambda_n)$, with $m \geq 1$ negative speeds and $p \geq 1$ positive speeds ($m+p=n$) such that:
$$
\lambda_1(x) \leq \cdots \leq \lambda_m(x) <0<\lambda_{m+1}(x) \leq \cdots \leq \lambda_{m+p}(x), \quad \forall x \in [0,1],
$$
and
$$(\lambda_k(x)=\lambda_l(x) \quad \text{ for some } x \in [0,1]) \quad \Longrightarrow \quad (\lambda_k(x)=\lambda_l(x) \quad \text{ for every } x \in [0,1]),$$
for every $k,l \in \ens{1,\ldots,n}$.
The matrix $M \in L^{\infty}(0,1)^{n \times n}$ couples the equations of the system inside the domain.
The function $u:(0,T)\times(0,1) \to \R^n$ is called the control, it will be at our disposal.
It is crucial to point out that, all along this work, the control has the same number of components as the state.
On the other hand, it is allowed to act only on a subdomain $(0,T) \times \omega$, where $\omega$ is a fixed but arbitrary nonempty open subset of $(0,1)$.

The system will be evolving forward in time, so we consider an initial condition at time $t=0$.

Let us now discuss the boundary conditions.
The structure of $\Lambda$ induces a natural splitting of the state into components corresponding to negative and positive speeds, denoted respectively by $y_-$ and $y_+$.
For the above system to be well-posed, we then need to add boundary conditions at $x=1$ for $y_-$ and at $x=0$ for $y_+$.
We will consider the following type of boundary conditions:
\begin{equation}\label{syst:BC}
y_-(t,1)=Q_1 y_+(t,1), \quad y_+(t,0)=Q_0 y_-(t,0),
\end{equation}
where $Q_0 \in \R^{p \times m}$ and $Q_1 \in \R^{m \times p}$ will be called the boundary coupling matrices.
\end{subequations}

In what follows, \eqref{syst:equ} and \eqref{syst:BC} with an initial condition $y(0,x)=y^0(x)$ will be referred to as system \eqref{syst}.
We recall that it is well-posed in the following functional setting: for every $T>0$, $y^0 \in L^2(0,1)^n$ and $u \in L^2(0,T;L^2(0,1)^n)$, there exists a unique solution $y \in C^0([0,T];L^2(0,1)^n)$.
By solution we mean weak solution and we refer for instance \cite[Appendix A]{BC16} for a proof of this well-posedness result.

The regularity $C^0([0,T];L^2(0,1)^n)$ of the solution allows us to consider control problems in the space $L^2(0,1)^n$:

\begin{definition}
Let $T>0$ be fixed.
We say that system \eqref{syst} is exactly controllable in time $T$ if, for every $y^0, y^1 \in L^2(0,1)^n$, there exists $u \in L^2(0,T;L^2(0,1)^n)$ such that the corresponding solution $y$ to system \eqref{syst} satisfies $y(T,\cdot)=y^1$.
\end{definition}

Since controllability in time $T_1$ implies controllability in any time $T_2 \geq T_1$, it is natural to try to find the smallest possible control time, the so-called ``minimal control time''.

\begin{definition}
We denote by $\Tinf \in [0,+\infty]$ the minimal control time (for the exact controllability) of system \eqref{syst}, that is
$$
\Tinf=
\inf\ens{T>0 \st \text{System \eqref{syst} is exactly controllable in time $T$}}.
$$
\end{definition}

The time $\Tinf$ is named ``minimal'' control time according to the current literature, despite it is not always a minimal element of the set.
We keep this naming here, but we use the notation with the ``inf'' to avoid eventual confusions.
The goal of this article is to characterize $\Tinf$.

In order to state our result and those of the literature, we need to introduce the following times:
\begin{equation}\label{def TkI}
T_k^I=\int_I \frac{1}{\abs{\lambda_k(\xi)}} \, d\xi, \quad 1 \leq k \leq n,
\end{equation}
\modif{for any nonempty interval $I \subset (0,1)$.}
We will also use the simpler notation $T_k$ when $I=(0,1)$.
The time $T_k$ is the time needed for the boundary controllability of a single equation (the transport equation) with speed $\lambda_k$.
Note that the assumption on the speeds implies in particular the following order relation (for any $I$):
$$
T_1^I \leq \cdots \leq T_m^I \quad \text{ and } \quad T_n^I \leq \cdots \leq T_{m+1}^I.
$$

Finally, we recall the key notion introduced in \cite[Section 1.2]{HO22-JDE} of ``canonical form'' for boundary coupling matrices.

\begin{definition}
We say that a matrix $Q \in \R^{k \times l}$ ($k,l \geq 1$ arbitrary) is in canonical form if it has at most one nonzero entry on each row and each column, and this entry is equal to $1$.
We denote by $(r_1,c_1), \ldots, (r_\rho,c_\rho)$ the positions of the corresponding nonzero entries, with $r_1<\ldots<r_\rho$.
\end{definition}

We can prove that, for every $Q \in \R^{k \times l}$, there exists a unique $Q^0 \in \R^{k \times l}$ in canonical form such that $LQU=Q^0$ for some upper triangular matrix $U \in \R^{l \times l}$ with only ones on its diagonal and some invertible lower triangular matrix $L \in \R^{k \times k}$.
The matrix $Q^0$ is called the canonical form of $Q$ and we can extend the definition of $(r_1,c_1), \ldots, (r_\rho,c_\rho)$ to any nonzero matrix.
Obviously, when $Q$ is full row rank (i.e. $\rank Q=k$), we have $r_\alpha=\alpha$ for every $\alpha \in \ens{1,\ldots,k}$.
We refer to the above reference for more details.

\subsection{Literature}\label{sect literature}

The controllability of 1D first-order hyperbolic systems with boundary controls (see e.g. system \eqref{syst bound cont} below) has been widely studied.
Two pioneering works are \cite{Cir69} for quasilinear systems and \cite{Rus78} for linear systems.
In \cite{Rus78} the author raised the open problem of finding the minimal control time.
This was solved in the particular case $M=0$ few years later in \cite{Wec82} for the null controllability property (i.e. when we only want to reach $y^1=0$).
Afterwards, it seems that the community turned its attention to quasilinear systems in the $C^1$ framework of so-called semi-global solutions (see e.g. \cite{LR03,Li10,Hu15}).
However, lately there has been a resurgence on finding the minimal control time.
This was initiated by the authors in \cite{CN19} and followed by a series of works \cite{CN21,CN21-pre,HO21-JMPA,HO21-COCV,HO22-JDE,HO23-pre}.
Notably, in \cite{HO21-JMPA} we gave a complete answer for the exact controllability property.
Given the importance of this result for the present work, we recall it in detail:

\begin{theorem}\label{thm HO21-JMPA}
\modif{Let $0<a \leq 1$ be fixed.
Let $\tau_-(0,a) \in [0,+\infty]$} denote the minimal control time for the exact controllability of the system 
\begin{equation}\label{syst bound cont}
\begin{dcases}
\pt{y}(t,x)+\Lambda(x) \px{y}(t,x)=M(x) y(t,x), \\
y_-(t,a)=u(t), \quad y_+(t,0)=Q_0 y_-(t,0), \\
y(0,x)=y^0(x),
\end{dcases}
\quad t \in (0,T), \, x \in \modif{(0,a),}
\end{equation}
in which $u \in L^2(0,T;\R^m)$ is the control.
Then, we have:
\begin{enumerate}
\item
\modif{$\tau_-(0,a)<+\infty$} if, and only if, $\rank Q_0=p$.

\item
If $\rank Q_0=p$, then
\modif{
\begin{equation}\label{Tinfbc0a}
\tau_-(0,a)=\max_{1 \leq j \leq p} \ens{T_{m+j}^{(0,a)}+T_{c_j}^{(0,a)}, \quad T_m^{(0,a)}},
\end{equation}
where the indices $c_j$ refer to $Q_0$.
}
\end{enumerate}
\end{theorem}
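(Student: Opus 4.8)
The statement to be proved is Theorem~\ref{thm HO21-JMPA}, which gives a complete characterization of the minimal control time $\tau_-(0,a)$ for the boundary-controlled system \eqref{syst bound cont}. Since the space interval is $(0,a)$ rather than $(0,1)$, one first observation is that a simple affine rescaling $x \mapsto x/a$ reduces the problem to the case $a=1$; this transforms $\Lambda(x)$ into $\frac1a\Lambda(ax)$ and $M(x)$ into $M(ax)$, and the travel times $T_k^{(0,a)}$ become the corresponding $T_k$ for the rescaled system. So I would first dispose of $a$ by this rescaling and then work on $(0,1)$, only restoring $a$ at the very end. (Alternatively, one cites the existing proof in \cite{HO21-JMPA} essentially verbatim, which is what I expect the authors do, since the result is quoted rather than reproved; but I will describe a genuine proof strategy.)

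\textbf{Necessity of $\rank Q_0 = p$ and the lower bound on the time.} For the first statement, the key is that the components $y_+$ entering at $x=0$ are determined only through $Q_0 y_-(t,0)$; if $\rank Q_0 < p$, then a fixed linear combination of the $y_+$ components is, at $x=0$, always zero, and this constraint propagates along the positive characteristics. More precisely, writing the solution via the characteristic/fixed-point representation, one sees that a nontrivial linear functional of the final state $y(T,\cdot)$ is annihilated for every control (the adjoint system has a nontrivial invariant), so exact controllability fails in every time $T$, giving $\tau_-(0,a)=+\infty$. Conversely, when $\rank Q_0 = p$ we get controllability in finite time from the second part, so the equivalence holds. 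For the lower bound $\tau_-(0,a) \ge \max_j\{T_{m+j}+T_{c_j},\,T_m\}$ (on $(0,1)$), I would argue by a finite-speed-of-propagation / duality argument: the term $T_m$ is the classical obstruction that the slowest controlled component $y_m$ needs time $T_m$ to be steered from the control location at $x=1$ across the whole interval (seen on the adjoint transport equation, information from $x=0$ needs time $T_m$ to reach $x=1$); the term $T_{m+j}+T_{c_j}$ reflects that the $j$-th positive component $y_{m+j}$, which is \emph{not} directly controlled (the control acts on $y_-$ at $x=a$), can only be influenced through the boundary coupling: a perturbation must travel in $y_{c_j}$ (a negative component) from $x=1$ down to $x=0$, taking time $T_{c_j}$, then be injected via $Q_0$ into $y_{m+j}$ and travel back up to $x=1$, taking time $T_{m+j}$. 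The canonical-form machinery recalled in the excerpt is exactly what isolates which negative speed $\lambda_{c_j}$ is the relevant one for the $j$-th positive channel after the triangular changes of variables $L,U$. Making this rigorous requires choosing, for each $j$, an appropriate localized initial/target datum and a corresponding solution of the adjoint system supported away from the relevant characteristic cone, so that the observability inequality must fail below the claimed time.

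\textbf{Sufficiency / the upper bound.} For $T > \max_j\{T_{m+j}+T_{c_j},\,T_m\}$ one has to construct a control. The standard route is: (i) by the equivalence between exact controllability and an observability inequality for the adjoint system, reduce to proving observability; or (ii) construct the control directly by a constructive/characteristics argument. I would favor the constructive approach used in the $M=0$ case and then absorb the zero-order term $M y$ by a perturbation/fixed-point argument (a compactness–uniqueness argument, or a Banach fixed point on a suitable weighted space once $T$ is strictly above the threshold, exploiting that the extra $\epsilon$ of time gives room). Concretely: first bring $Q_0$ to canonical form via the triangular transformations $L, U$ (which change $M$ but not $\Lambda$ and not the controllability type), so that each positive component $y_{m+j}$ is fed at $x=0$ exactly by $y_{c_j}(t,0)$; then, going backward along characteristics from the target at time $T$, one can prescribe the traces $y_-(\cdot,a)$ (the control) on a large enough time interval to force $y(T,\cdot)=y^1$, the point being that $T$ large enough ensures every characteristic reaching $(T,x)$ for $x\in(0,1)$ either originates from the control support in finite time, or can be routed through one reflection at $x=0$ using the coupling — and the worst-case route has length exactly $\max_j\{T_{m+j}+T_{c_j}\}$, while $T_m$ covers the purely negative components.

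\textbf{Main obstacle.} The hardest part is the exact matching of the travel-time bookkeeping with the canonical-form indices $c_j$ in both directions (lower and upper bound), i.e. proving that the combinatorial quantity $\max_j\{T_{m+j}+T_{c_j},T_m\}$ is \emph{exactly} right and not merely an upper or lower bound — this is where one must use the rank condition and the uniqueness of the canonical form carefully, and handle the possibly-equal speeds (the clustering hypothesis on the $\lambda_k$). Incorporating the lower-order coupling term $M$ without losing the sharp time is the second delicate point, typically handled by noting that observability is stable under $L^\infty$ zero-order perturbations as long as one stays strictly above the critical time, via a compactness–uniqueness argument whose unique-continuation step is itself nontrivial for hyperbolic systems; but since Theorem~\ref{thm HO21-JMPA} is imported from \cite{HO21-JMPA}, in the present paper one would simply invoke it.
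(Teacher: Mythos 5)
The paper does not prove Theorem~\ref{thm HO21-JMPA} at all: it is recalled verbatim from \cite{HO21-JMPA} precisely so that it can be cited, and your parenthetical guess that the authors simply import it is correct. So there is no in-paper proof to compare against, and the only fair comparison is between your independent sketch and what such a proof actually requires.

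Taken as a proof attempt, your outline has the right heuristics (the rescaling to $a=1$, the travel-time bookkeeping $T_{m+j}+T_{c_j}$ via one reflection at $x=0$, the role of the canonical form in selecting $c_j$) but contains a genuine gap at the step where you absorb the zero-order coupling $M$ ``by a perturbation/fixed-point argument'' or by the claim that ``observability is stable under $L^\infty$ zero-order perturbations as long as one stays strictly above the critical time.'' This cannot be a soft perturbation fact: for the \emph{null} controllability of the very same class of systems the minimal time genuinely depends on $M$ (this is the content of \cite{CN19,CN21} and is why the present paper distinguishes exact from null controllability), so any argument that treats $M$ as a harmless lower-order term would prove a false statement in the null-controllable setting. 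The $M$-independence of the exact-controllability time is a theorem requiring the specific machinery of \cite{HO21-JMPA} (Volterra/backstepping-type reductions combined with the canonical form), not compactness--uniqueness. Two further points are only gestured at: (i) the necessity of $\rank Q_0=p$ for general $M$ --- your ``invariant linear combination'' argument works as stated only for $M=0$, and even there the functional must be reparametrized along each characteristic $\phi_{m+j}$ separately since the positive speeds differ (compare the construction in Section~\ref{sect nec cond}, where the analogous argument is carried out and where $M$ can be absorbed into the control only because the control is internal and everywhere supported); and (ii) the sharpness of the lower bound $\max_j\{T_{m+j}+T_{c_j},\,T_m\}$, which is the main technical content of the cited work and is not obtained by the qualitative finite-speed reasoning you describe.
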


\modif{
\begin{remark}\label{rem cont at x=0}
The case of a control acting on the other part of the boundary can be reduced to the previous one after changing $x$ to $1-x$ and then relabeling the unknowns backwards (i.e. considering $\hat{y}_k(t,x)=y_{n+1-k}(t,1-x)$).
More precisely, let $0 \leq b<1$ be fixed, and let $\tau_+(b,1) \in [0,+\infty]$ denote the minimal control time for the exact controllability of the system 
$$
\begin{dcases}
\pt{y}(t,x)+\Lambda(x) \px{y}(t,x)=M(x) y(t,x), \\
y_-(t,1)=Q_1 y_+(t,1), \quad y_+(t,b)=v(t), \\
y(0,x)=y^0(x),
\end{dcases}
\quad t \in (0,T), \, x \in (b,1),
$$
in which $v \in L^2(0,T;\R^p)$ is the control.
Then, $\tau_+(b,1)<+\infty$ if, and only if, $\rank Q_1=m$, and in that case we have
\begin{equation}\label{Tinfbcb1}
\tau_+(b,1)=\max_{1 \leq i \leq m} \ens{T_{m+1-i}^{(b,1)} +T_{n+1-c_i}^{(b,1)}, \quad T_{m+1}^{(b,1)}},
\end{equation}
where the indices $c_i$ refer to the matrix $\rev{Q}_0$ whose $(i,j)$ entry is the $(m+1-i,p+1-j)$ entry of $Q_1$.
\end{remark}
}

Controllability of system \eqref{syst bound cont} is also sometimes referred to as one-sided controllability, because the control acts only on one side of the boundary.
The situation where two controls are used, or two-sided controllability, was also studied.
Notably, we can extract the following result from the literature (see \cite[Remark 1.10 (ii)]{HO21-COCV}\footnote{To apply the backstepping method mentioned therein in our non-strictly hyperbolic framework, we first proceed as described in the first item of \cite[Remark 4.6]{HO21-JMPA}.}):

\modif{
\begin{theorem}\label{thm 2 controls}
Let $0 \leq c<d \leq 1$ be fixed.
Let $\tau_{-,+}(c,d)$ denote the minimal control time for the exact controllability of the system 
$$
\begin{dcases}
\pt{y}(t,x)+\Lambda(x) \px{y}(t,x)=M(x) y(t,x), \\
y_-(t,d)=u(t), \quad y_+(t,c)=v(t), \\
y(0,x)=y^0(x),
\end{dcases}
\quad t \in (0,T), \, x \in (c,d),
$$
in which $u \in L^2(0,T;\R^m)$ and $v \in L^2(0,T;\R^p)$ are the controls.
Then,
\begin{equation}\label{Tinfbccd}
\tau_{-,+}(c,d)=\max\ens{T_{m}^{(c,d)}, \quad T_{m+1}^{(c,d)}}.
\end{equation}
\end{theorem}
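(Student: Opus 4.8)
\medskip
\noindent\textit{Proof plan.} The plan is to reduce the statement to the case $M=0$, which is elementary, by a backstepping change of variables. The affine rescaling $x\mapsto(x-c)/(d-c)$ turns the problem on $(c,d)$ into one on $(0,1)$ with new speeds, while leaving each quantity $T_k^{(c,d)}$ unchanged; hence we may assume $(c,d)=(0,1)$. Write $T^\ast:=\max\{T_m,T_{m+1}\}$, and note that the ordering of the speeds gives $T^\ast=\max_{1\le k\le n}T_k$.

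\medskip
\noindent When $M=0$ the system splits into $n$ uncoupled scalar transport equations, the $k$-th one having its control prescribed on its own inflow boundary ($x=1$ if $k\le m$, $x=0$ if $k\ge m+1$). Tracing characteristics backward from time $T$, a single such equation is exactly controllable in time $T$ if and only if $T\ge T_k$: for $T\ge T_k$ the control reaches all of $(0,1)$ before time $T$, whereas for $T<T_k$ the value of that component at time $T$ near the uncontrolled endpoint is entirely fixed by the initial datum. Thus the decoupled system is exactly controllable in time $T$ if and only if $T\ge\max_kT_k=T^\ast$, i.e. $\tau_{-,+}(0,1)=T^\ast$ when $M=0$.

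\medskip
\noindent For a general $M\in L^\infty(0,1)^{n\times n}$ I would invoke the backstepping method in the form used in \cite[Remark 1.10(ii)]{HO21-COCV}. After first reorganizing the system so as to reduce to the strictly hyperbolic case (grouping the equations sharing a common speed, as in the first item of \cite[Remark 4.6]{HO21-JMPA}), one constructs a bounded and boundedly invertible transformation $\Phi$ of $L^2(0,1)^n$, acting only in the space variable (hence time-independent), which intertwines the $M$-system with the $M=0$-system and reduces to the identity on $y_-$ at $x=1$ and on $y_+$ at $x=0$; consequently the controls of the two systems are in bijective correspondence. Being a fixed $L^2$-isomorphism, $\Phi$ carries, for every $T>0$, the set of states reachable in time $T$ by the $M$-system onto that reachable in time $T$ by the $M=0$-system, so one of these sets equals $L^2(0,1)^n$ exactly when the other does. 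Hence the minimal control time is the same for $M$ and for $M=0$, namely $T^\ast$, which together with the reduction to $(0,1)$ proves that $\tau_{-,+}(c,d)=\max\{T_m^{(c,d)},T_{m+1}^{(c,d)}\}$.

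\medskip
\noindent The step I expect to require real work is the construction of $\Phi$: one must solve the associated backstepping kernel equations---linear transport equations on a triangular domain whose boundary data encode the coupling matrix $M$---establish their well-posedness, verify that the resulting integral transformation is invertible on $L^2$, and deal with the fact that we are not in a strictly hyperbolic setting (precisely what the footnote to \cite[Remark 1.10(ii)]{HO21-COCV} addresses, via the block reorganization of \cite[Remark 4.6]{HO21-JMPA}). A further delicate point is that $\Phi$ must genuinely act as the identity at the two controlled endpoints---equivalently, that it maps controls to controls bijectively---since it is exactly this feature that prevents the change of variables from altering the minimal control time; everything else is routine. (The lower bound $\tau_{-,+}(0,1)\ge T^\ast$ can alternatively be extracted directly from a finite-speed-of-propagation analysis of the reachable set, but this is less transparent than deducing it from the $M=0$ case through $\Phi$.)
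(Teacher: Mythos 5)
Your proposal is correct in outline and follows essentially the same route as the paper, which for this theorem gives no proof of its own but extracts it from \cite[Remark 1.10 (ii)]{HO21-COCV} combined with the block reorganization of \cite[Remark 4.6]{HO21-JMPA} to handle the non-strictly hyperbolic case --- i.e.\ precisely the rescaling-plus-backstepping reduction to an explicitly controllable target system that you sketch, with the decoupled case settled by characteristics. The only caveat, which does not affect the argument, is that the backstepping target in the cited literature need not be exactly the $M=0$ system (it may retain a triangular or nonlocal coupling), but since both inflow boundaries are fully actuated, any such target is still controllable in time $\max\{T_m^{(c,d)},T_{m+1}^{(c,d)}\}$ by the same characteristics argument, and a boundedly invertible spatial transformation preserves exact controllability even without being the identity at the endpoints.
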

}

On the contrary, there are very few results concerning the internal controllability of 1D first-order hyperbolic systems.
The first ones seem \cite{ZLR16} and \cite{ABCO17}.
In \cite{ZLR16}, the authors studied the local exact controllability of quasilinear versions of system \eqref{syst} (for semi-global $C^1$ solutions).
This problem was also revisited in a linear setting in the recent paper \cite{LLQ24}.
In both of these articles, the authors proved in particular that system \eqref{syst} with an interval $\omega=(a,b)$ is exactly controllable in any time
\begin{equation}\label{cont time LLQ24}
T>\max\ens{a,1-b}(T_{m+1}+T_m),
\end{equation}
provided also that $Q_0,Q_1$ are invertible square matrices.
On the other hand, in \cite{ABCO17}, the authors investigated the more challenging problem of internal controllability by a reduced number of controls.
For a system of two equations with periodic boundary conditions, they proved the local exact controllability by only one control.
The present paper is about linear systems when the number of controls is equal to the number of state variables, in the same spirit as in \cite{LLQ24}.
We will generalize the result mentioned above by obtaining the best control time (i.e. $\Tinf$), by considering an arbitrary open set $\omega$ and by showing the necessity of the assumptions on $Q_0,Q_1$.

\subsection{Main result and comments}

The main result of this paper is the following complete characterization of the exact controllability properties for system \eqref{syst}.

\begin{theorem}\label{main thm}
Let $\omega \subset (0,1)$ be a nonempty open subset.
\begin{enumerate}
\item\label{main thm i1}
Assume that
\begin{equation}\label{rank Q}
Q_0,Q_1 \text{ are invertible, }
\end{equation}
(necessarily, $m=p=n/2$).
\modif{Then, the minimal control time $\Tinf$ of system \eqref{syst} is
\begin{equation}\label{min cont time}
\Tinf=
\begin{dcases}
0 & \text{ if } \clos{\omega}=[0,1], \\
\max_{I \in \concompo(\clos{\omega}^c)} \Tinfbc(I) & \text{ otherwise, }
\end{dcases}
\end{equation}
where:
\begin{itemize}
\item
$\concompo(\clos{\omega}^c)$ denotes the set of connected components of $\clos{\omega}^c=(\R \setminus \clos{\omega}) \cap (0,1)$.

\item
$\Tinfbc(I)$ denotes the minimal control time for the exact controllability of the system
\begin{equation}\label{syst I:equ}
\begin{dcases}
\pt{y}(t,x)+\Lambda(x) \px{y}(t,x)=M(x) y(t,x), \\
y(0,x)=y^0(x),
\end{dcases}
\quad t \in (0,T), \, x \in I,
\end{equation}
with the following boundary conditions:
\begin{equation}\label{syst I:BC}
\begin{aligned}
& \text{if $I=(0,a)$ with $0<a<1$:}
& y_-(t,a) &=u(t), & y_+(t,0) &=Q_0 y_-(t,0),
\\
& \text{if $I=(b,1)$ with $0<b<1$:}
& y_-(t,1) &=Q_1y_+(t,1), & y_+(t,b) &=v(t),
\\
& \text{if $I=(c,d)$ with $0<c<d<1$:} 
& y_-(t,d) &=u(t), & y_+(t,c) &=v(t),
\end{aligned}
\end{equation}
(above, $u,v \in L^2(0,T;\R^\frac{n}{2})$ are the controls).
We recall that $\Tinfbc(I)$ is explicitly given by one of the formulas \eqref{Tinfbc0a}, \eqref{Tinfbcb1} or \eqref{Tinfbccd}.
\end{itemize}
}

\item\label{main thm NC}
Conversely, if system \eqref{syst} is exactly controllable in some time, then \eqref{rank Q} holds.
\end{enumerate}
\end{theorem}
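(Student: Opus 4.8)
I would prove Theorem~\ref{main thm} by splitting along the two items, the common engine being a \emph{localization} principle relating the internal-control problem on $(0,1)$ to the boundary-control problems on the components of $\clos{\omega}^c$. Fix a connected component $I$ of $\clos{\omega}^c$; since $I\cap\omega=\emptyset$, no control acts on $I$, so the restriction to $I$ of any solution of \eqref{syst} solves the homogeneous system \eqref{syst I:equ}; at an endpoint of $I$ lying in $\{0,1\}$ it obeys the corresponding condition of \eqref{syst:BC}, while at an interior endpoint its trace is a well-defined $L^2(0,T)$ function by the sidewise energy estimate (hidden regularity) for $1$D hyperbolic systems. Hence, if \eqref{syst} is exactly controllable in time $T$, then for arbitrary $y^0,y^1$ the restricted trajectory reaches $y^1|_I$ using these traces as boundary controls, so the boundary-control system attached to $I$ as in \eqref{syst I:BC} is exactly controllable in time $T$. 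This has two consequences: first, $T\ge\Tinfbc(I)$, whence $\Tinf\ge\max_{I}\Tinfbc(I)$, giving the lower bound in item~\ref{main thm i1}; second (used for item~\ref{main thm NC}), if some component equals $(0,a)$ then Theorem~\ref{thm HO21-JMPA} forces $\rank Q_0=p$, and if some component equals $(b,1)$ then Remark~\ref{rem cont at x=0} forces $\rank Q_1=m$.

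For the upper bound of item~\ref{main thm i1} I would fix $T>\max_{I}\Tinfbc(I)$ (the case $\clos{\omega}=[0,1]$, where $\Tinf=0$, being the one below with $T>0$ arbitrary) and build a control of \eqref{syst} by a \emph{cut--control--glue} scheme. Since $\clos{\omega}$ contains points arbitrarily close to each endpoint of each component, one can place finitely many cut points, each inside an open subinterval of $\omega$, so as to isolate, between two ``buffer'' subintervals of $\omega$, each of the finitely many components $I$ with $\Tinfbc(I)$ not too small. On a subinterval isolating a component, exact controllability in time $T$ comes from Theorem~\ref{thm HO21-JMPA}, Remark~\ref{rem cont at x=0} or Theorem~\ref{thm 2 controls} (using $\rank Q_0=p$, $\rank Q_1=m$), slight shrinking only decreasing the control time. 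On a buffer, or on any subinterval where $\omega$ is dense, exact controllability holds in arbitrarily small time: the dense interior control lets one prescribe the outgoing boundary traces, hence — invertibility of $Q_0,Q_1$ being used precisely here — the incoming data through \eqref{syst:BC}, after which only short characteristics remain and are corrected at bounded cost. The trace-matching across cut points is absorbed by the buffers, whose unconstrained interior control joins any left state to any right one; the remaining (countably many, negligible) small components are dealt with by approximation, using $\Tinfbc(I)\to0$ as $\mathrm{diam}\,I\to0$ and the monotonicity of $\Tinf$ with respect to $\omega$.

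For item~\ref{main thm NC}, assume \eqref{syst} exactly controllable in some time. If a component of $\clos{\omega}^c$ equals $(0,a)$, the localization principle and Theorem~\ref{thm HO21-JMPA} give $\rank Q_0=p$. If no component touches $0$, then $\omega$ is dense near $0$, and I would instead localize on $(0,c_1)$, with $c_1$ the left endpoint of the leftmost component (or $c_1=1$ when $\clos{\omega}=[0,1]$), and invoke the energy obstruction underlying the necessity part of Theorem~\ref{thm HO21-JMPA}: the forward characteristics reaching $x=0$ near time $T$ are arbitrarily short and carry only the datum $Q_0 y_-(\cdot,0)\in\range Q_0$, so a target whose forward part concentrates near $x=0$ transversally to $\range Q_0$ has infinite control cost, even with interior control available up to $x=0$. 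Thus $\rank Q_0=p$, and symmetrically $\rank Q_1=m$. Since $Q_0\in\R^{p\times m}$ forces $p\le m$ and $Q_1\in\R^{m\times p}$ forces $m\le p$, we obtain $m=p$ and $Q_0,Q_1$ square of full rank, i.e. \eqref{rank Q}.

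I expect the main obstacle to be the upper bound of item~\ref{main thm i1}: turning the heuristic cut--control--glue picture into a genuine $L^2$-control. The delicate points are the choice of cut points inside an arbitrary open $\omega$ while keeping the shrunk components' control times below $T$; the uniform-in-refinement control-cost bounds on the buffer and dense pieces — which is exactly where the $L^2$ theory is needed, not a smooth-data interpolation; and the passage from finitely to countably many components. For item~\ref{main thm NC}, the nontrivial case is when $\omega$ accumulates at a boundary point, where the rank condition must be shown to remain necessary despite control being available arbitrarily close to that boundary.
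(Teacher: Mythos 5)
Your overall architecture matches the paper's: lower bound (and part of the rank necessity) by restricting the controlled solution to the components of $\clos{\omega}^c$ and reading off a boundary-control problem, upper bound by placing finitely many control intervals inside $\omega$ and invoking Theorems \ref{thm HO21-JMPA}, \ref{thm 2 controls} and Remark \ref{rem cont at x=0}. Two points need attention.

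First, in the upper bound the phrase ``slight shrinking only decreasing the control time'' has the geometry backwards. Since the cut intervals must lie inside the open set $\omega$ while the endpoints of a component $I$ of $\clos{\omega}^c$ are only limit points of $\omega$, the uncontrolled region isolated between two buffers is a slight \emph{enlargement} of $I$, not a shrinking; monotonicity of $\Tinfbc$ is of no help there, and what is actually needed is the continuity $\Tinfbc(I_k)\to\Tinfbc(I)$ for $I_k\supset I$ with $\abs{I_k\setminus I}\to 0$, read off from the explicit formulas \eqref{Tinfbc0a}--\eqref{Tinfbccd}. This is precisely what the paper's Lemma \ref{lem omegahat} is engineered around. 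The paper also replaces your trace-matching scheme (which requires hidden regularity, a resolution of the circular dependence between each buffer's outgoing traces and its neighbours' incoming ones, and a check that the glued object is a weak solution) by the cleaner device $y=\xi\,\yout+(1-\xi)\,\yin$, where $\yin$ is the solution controlled on all of $(0,1)$ by time-reversibility and $\yout$ collects the boundary-controlled solutions on the components of $\clos{\omegahat}^c$: the control is then read off from the equation, is automatically supported in $\omega$, and no traces ever need to be matched.

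Second, for item \ref{main thm NC} your observation that a component of $\clos{\omega}^c$ touching $0$ or $1$ settles the corresponding rank condition via Theorem \ref{thm HO21-JMPA} is a legitimate shortcut the paper does not use; but the remaining case --- which, after enlarging the control region, is simply $\omega=(0,1)$ --- is where the entire difficulty sits, and you dispose of it in one sentence (``infinite control cost'' for targets concentrating near $x=0$ transversally to $\range Q_0$). The mechanism you name is the correct one, and it can be made rigorous either on the primal side (characteristics reaching $(T,x)$ with $x$ small from $x=0$ pick up only an $O(\sqrt{x})$ contribution from an $L^2$ control, while the incoming trace lies in $\range Q_0$, contradicting the uniform cost bound for a concentrating target) or, as the paper does, on the dual side by exhibiting adjoint data $z^1_+=g^\nu(\phi_{m+j}(\cdot))\eta$ with $R_0^*\eta=0$ that violate the observability inequality. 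Either route requires real work that is absent from your sketch --- in the paper, an induction along characteristics bouncing through both boundary conditions to prove $z_-\equiv 0$, followed by an explicit family of concentrating profiles $f^\nu$. As written, your necessity argument identifies the obstruction but does not yet prove it.
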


We recall that any open set $U$ in $\R$ can be partitioned into disjoint open intervals $I_k$, $k \in S$, with countable $S$, and the connected components of $U$ are simply these $I_k$.
If $U$ is bounded and $S$ is infinite, then necessarily $\abs{I_k} \to 0$.
It follows \modif{from the explicit formulas that $\Tinfbc(I_k) \to 0$} as well, and the notation \modif{$\max \Tinfbc$} used in the statement of our result is thus justified.

\begin{remark}
Theorem \ref{main thm} generalizes \cite[Theorem 2.1]{LLQ24} (and, to some extent, also \cite[Theorem 5.1]{ZLR16}) on important aspects:
\begin{itemize}
\item
We obtain the best possible control time, \modif{with an explicit way to compute it (see e.g. Example \ref{examples} below).}

\item
We consider an arbitrary open subset $\omega$, not just an interval.

\item
We show that the invertibility of the matrices $Q_0,Q_1$ are in fact necessary.
\end{itemize}
\end{remark}

\begin{example}\label{examples}
Assume that $\Lambda$ is constant \modif{(in that case, we simply have $T_k^I=\abs{I} T_k$, see \eqref{def TkI}).}
\begin{enumerate}
\item
Consider an interval $\omega=(a,b)$ and assume that $Q_0,Q_1$ are the identity matrix.
Then, \modif{combining our main result with Theorem \ref{thm HO21-JMPA} (and Remark \ref{rem cont at x=0}), we have}
$$
\Tinf=\max\ens{a,1-b} \max_{1 \leq i \leq \frac{n}{2}} \ens{T_{\frac{n}{2}+i}+T_i}.
$$
\modif{Note that, unless $(a,b)=(0,1)$,} this time is strictly smaller than the control time given in \cite{LLQ24} (see \eqref{cont time LLQ24}).

\item
\modif{Assume that $\omega$ touches both parts of the boundary (i.e. $0,1 \in \partial\omega$), $\clos{\omega} \neq [0,1]$, and that $Q_0,Q_1$ are invertible.
Then, combining our main result with Theorem \ref{thm 2 controls}, we see that
$$
\Tinf=L\max\ens{T_{\frac{n}{2}}, \quad T_{\frac{n}{2}+1}},
$$
where $L=\max_{I \in \concompo(\clos{\omega}^c)} \abs{I}$ belongs to $(0,1)$.
In particular, system \eqref{syst} is exactly controllable in any time $T \geq \max\ens{T_{\frac{n}{2}}, \quad T_{\frac{n}{2}+1}}$.}

\end{enumerate}
\end{example}

\begin{remark}
Let us also mention the work \cite{BO14}, where it is shown that the connected components of the complement of the control domain can also play an important role in the controllability properties of 1D parabolic systems.
\end{remark}

\section{The minimal control time}\label{sect suff cond}

\modif{
In this part, we prove the first statement of our main result.
Let us denote by $\taumax$ the right-hand side of \eqref{min cont time}, that is
\begin{equation}\label{def taumax}
\taumax=
\begin{dcases}
0 & \text{ if } \clos{\omega}=[0,1], \\
\max_{I \in \concompo(\clos{\omega}^c)} \Tinfbc(I) & \text{ otherwise. }
\end{dcases}
\end{equation}

We first point out that the minimal control time by boundary controls $\Tinfbc$ enjoys the following important properties, as is clear from the explicit formulas \eqref{Tinfbc0a}, \eqref{Tinfbcb1} and \eqref{Tinfbccd}:
\begin{enumerate}[(a)]
\item\label{tau prop 1}
$\Tinfbc(I) \leq C\abs{I}$ for every $I$, for some some $C>0$ independent of $I$.

\item\label{tau prop 2}
$\Tinfbc(I_k) \to \Tinfbc(I)$ for every $I$ and every sequence $I_k \supset I$ such that $\abs{I_k \setminus I} \to 0$.

\item\label{tau prop 3}
$\Tinfbc(I_1) \leq \Tinfbc(I_2)$ for every $I_1 \subset I_2$.
\end{enumerate}

Based on these three properties, we will prove the following essential lemma.

\begin{lemma}\label{lem omegahat}
For every $\epsilon>0$, there exists a nonempty open subset $\omegahat \subset\subset \omega$ such that:
\begin{enumerate}
\item
$\omegahat$ is a finite union of disjoint open intervals.

\item
$\max_{I \in \concompo(\clos{\omegahat}^c)} \Tinfbc(I)
\leq \taumax +\epsilon$.
\end{enumerate}

\end{lemma}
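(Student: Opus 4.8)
The plan is to approximate $\omega$ from inside by a finite union of intervals $\omegahat$ whose complement $\clos{\omegahat}^c$ has only two kinds of connected components: slight enlargements of the finitely many ``long'' components of $\clos{\omega}^c$, on which property \ref{tau prop 2} keeps $\Tinfbc$ within $\epsilon$ of its value on the original gap (hence $\leq\taumax+\epsilon$), and ``short'' intervals, on which property \ref{tau prop 1} makes $\Tinfbc<\epsilon\leq\taumax+\epsilon$.

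First I would set the scales. Let $C$ be the constant of property \ref{tau prop 1} and fix $\eta_0>0$ small (of order $\epsilon/C$; its precise value absorbs the losses below). By property \ref{tau prop 1}, the family $\mathcal F=\{I_1,\dots,I_N\}$ of connected components of $\clos{\omega}^c$ of length $\geq\eta_0$ is finite (with $N=0$ if $\clos{\omega}=[0,1]$, in which case $\clos{\omega}^c=\emptyset$). For each $j$, property \ref{tau prop 2} yields, by a routine contradiction argument, a number $\delta_j>0$ such that any interval $\widetilde I\supseteq I_j$ with $\abs{\widetilde I\setminus I_j}<\delta_j$ satisfies $\Tinfbc(\widetilde I)\leq\Tinfbc(I_j)+\epsilon$, hence $\leq\taumax+\epsilon$ since $\Tinfbc(I_j)\leq\taumax$; note that the boundary type of $\widetilde I$ then automatically matches that of $I_j$. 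Finally put $\eta=\tfrac14\min\{\eta_0,\delta_1,\dots,\delta_N\}$.

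Next comes the construction. Write $B=[0,1]\setminus\bigcup_{j}I_j$ as a disjoint union of at most $N{+}1$ closed intervals $B_0,\dots,B_N$. The key point is that no $B_l$ contains a subinterval of length $\geq\eta_0$ disjoint from $\omega$: an interval disjoint from the open set $\omega$ must lie in a single component of $\clos{\omega}^c$ (at a point of $\clos{\omega}$ inside it, $\omega$ would have to accumulate, a contradiction), hence inside some $I_j$, which $B_l$ avoids. Overlaying a grid of mesh $\eta_0$ on each nondegenerate $B_l$, every cell therefore meets $\omega$, so it contains a nonempty open interval with closure in $\omega$; for the cells abutting an endpoint $a$ of some $I_j$ we pick this interval inside $(a-\eta,a)$ resp. $(a,a+\eta)$, which is legitimate because $a\in\partial\omega$ while the neighbouring side of $a$ lies in $I_j\subset\clos{\omega}^c$, so $\omega$ accumulates at $a$ from the $B_l$-side. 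Shrinking slightly, these finitely many intervals are pairwise disjoint; let $\omegahat$ be their union, a nonempty finite union of disjoint open intervals with $\omegahat\subset\subset\omega$.

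It remains to verify the estimate and to point out the delicate part. Let $\hat I$ be a component of $\clos{\omegahat}^c$. If $\hat I$ meets some $I_j$ then $\hat I\supseteq I_j$ and, since each $B_l$ separating consecutive elements of $\mathcal F$ carries an interval of $\omegahat$, $\hat I$ meets no other $I_{j'}$; the extra parts $\hat I\setminus I_j$ are at most two intervals, each of length $<\eta$ by construction, so $\abs{\hat I\setminus I_j}<2\eta<\delta_j$ and $\Tinfbc(\hat I)\leq\taumax+\epsilon$. Otherwise $\hat I$ lies in some $B_l$ between two $\omegahat$-intervals of adjacent grid cells (or between such an interval and an endpoint of $B_l$), so $\abs{\hat I}=O(\eta_0)$, whence $\Tinfbc(\hat I)\leq C\abs{\hat I}<\epsilon/2\leq\taumax+\epsilon$ by property \ref{tau prop 1}; taking the maximum over the finitely many $\hat I$ gives the lemma. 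The real obstacle is the bookkeeping of scales: $\eta_0$ must be fixed first (it pins down the finite list $\mathcal F$), then the $\delta_j$ are extracted from property \ref{tau prop 2}, and only then may the enlargement scale $\eta$ of the filling be chosen small enough to stay inside the range where property \ref{tau prop 2} applies; one must also dispose of the elementary topological facts used above (endpoints of $\clos{\omega}^c$-components lie in $\partial\omega$; $\omega$ accumulates at them from the $B$-side; an interval disjoint from $\omega$ lies in one component of $\clos{\omega}^c$; the grid filling leaves only $O(\eta_0)$-gaps). Property \ref{tau prop 3} is not strictly needed here, but it is what makes approximation from inside the natural strategy, since shrinking $\omega$ can only increase the $\Tinfbc$ of the gaps.
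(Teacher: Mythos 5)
Your proof is correct and follows essentially the same strategy as the paper's: approximate $\omega$ from inside by finitely many short intervals of $\omega$, placed so that each component of $\clos{\omegahat}^c$ is either a slight enlargement of a component of $\clos{\omega}^c$ (controlled by property \ref{tau prop 2}) or an interval of length $O(\epsilon/C)$ (controlled by property \ref{tau prop 1}), with property \ref{tau prop 3} supplying the comparisons. The only difference is organizational: the paper first partitions $[0,1]$ into cells with $\Tinfbc \leq \taumax+\epsilon/2$ and places two intervals per cell meeting $\omega$ (near the infimum and supremum of $\omega$ in that cell), whereas you isolate the finitely many ``long'' components of $\clos{\omega}^c$ and grid their complement; both executions are sound.
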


Let us admit this lemma for a moment and let us prove the first statement of our main result.
}

\begin{proof}[\modif{Proof of Theorem \ref{main thm}, item \ref{main thm i1}}]
\begin{enumerate}
\item
Naturally, we start with the case $\omega=(0,1)$.
This means that the control operator is simply the identity.
Now observe that system \eqref{syst} is time reversible since $Q_0,Q_1$ are assumed to be invertible.
It is easy and well-known how to control such a system in any time.
Indeed, let $T>0$ and $y^0,y^1 \in L^2(0,1)^n$ be fixed.
Let $\eta \in C^1([0,T])$ be a time cut-off function such that
$$\eta(0)=1, \quad \eta(T)=0.$$
We define
\begin{equation}\label{def y omega full}
y(t,x)=\eta(t)y^f(t,x)+(1-\eta(t)) y^b(t,x),
\end{equation}
where $y^f$ is the solution to the forward problem (without control)
$$
\begin{dcases}
\pt{y^f}(t,x)+\Lambda(x) \px{y^f}(t,x)=M(x) y^f(t,x), \\
y^f_-(t,1)=Q_1 y^f_+(t,1), \quad y^f_+(t,0)=Q_0 y^f_-(t,0), \\
y^f(0,x)=y^0(x),
\end{dcases}
\quad t \in (0,T), \, x \in (0,1),
$$
and $y^b$ is the solution to the backward problem (without control)
$$
\begin{dcases}
\pt{y^b}(t,x)+\Lambda(x) \px{y^b}(t,x)=M(x) y^b(t,x), \\
y^b_+(t,1)=Q_1^{-1} y^b_-(t,1), \quad y^b_-(t,0)=Q_0^{-1} y^b_+(t,0), \\
y^b(T,x)=y^1(x),
\end{dcases}
\quad t \in (0,T), \, x \in (0,1).
$$
Note that this second system is well-posed (it can be put in the form of the first one by considering the change of variable $t \mapsto T-t$ and then relabeling the unknowns).
By construction, $y$ satisfies the boundary conditions \eqref{syst:BC}, the initial condition $y(0,\cdot)=y^0$ and the final condition $y(T,\cdot)=y^1$.
The equations will also be satisfied if we take as control
\begin{equation}\label{def cont formal}
u(t,x)=\pt{y}(t,x)+\Lambda(x) \px{y}(t,x) -M(x) y(t,x).
\end{equation}
This expression is a priori a formal one, but it can be made rigorous by using the equations satisfied by $y^f$ and $f^b$ and defining in fact
$$u(t,x)=\eta'(t)(y^f(t,x)-y^b(t,x)).$$
Note that $u \in C^0([0,T];L^2(0,1)^n)$.
We can then check that $y$ defined by \eqref{def y omega full} is indeed the weak solution to system \eqref{syst} associated with this $u$.

\item
Let us now consider an arbitrary open subset $\omega \subset (0,1)$.
\modif{We want to show that $\Tinf=\taumax$, where $\taumax$ is given by \eqref{def taumax}.}
The inequality $\Tinf \geq \taumax$ is clear.
Indeed, if $T>\Tinf$ and $\clos{\omega} \neq [0,1]$, then, for any open interval $I \subset \clos{\omega}^c$, \modif{the system satisfied by the restriction of $y$ to $(0,T) \times I$, which is of the form \eqref{syst I:equ}-\eqref{syst I:BC} (with controls in $L^2(0,T;\R^\frac{n}{2})$ since $y \in C^0([0,1];L^2(0,T)^n)$), is also exactly controllable in time $T$.}
Let now $T$ be fixed such that $T>\taumax$ and let us prove that system \eqref{syst} is exactly controllable in time $T$.
\modif{By Lemma \ref{lem omegahat},} there exists a nonempty open subset $\omegahat \subset\subset \omega$ such that
\begin{equation}\label{time for omegahat}
\modif{\max_{I \in \concompo(\clos{\omegahat}^c)} \Tinfbc(I)} <T,
\end{equation}
and \modif{$\clos{\omegahat}^c$ is the union of} some disjoint open intervals $I_1,\ldots,I_N \subset (0,1)$.
We now define
\begin{equation}\label{def y}
y(t,x)=\xi(x) \yout(t,x) +(1-\xi(x)) \yin(t,x),
\end{equation}
where:
\begin{itemize}
\item
$\xi \in C^1([0,1])$ is a space cut-off function such that
$$
\xi(x)=
\begin{dcases}
1 & \text{ if } x \not\in \clos{\omega_1}, \\
0 & \text{ if } x \in \clos{\omegahat},
\end{dcases}
$$
where $\omega_1$ is an arbitrary open subset such that $\omegahat \subset\subset \omega_1 \subset\subset \omega$.

\item
$\yout=y^{I_k}$ in $[0,T] \times I_k$, where $y^{I_k} \in C^0([0,T];L^2(I_k)^n)$ is the controlled solution to \eqref{syst I:equ}-\eqref{syst I:BC} (with $I=I_k$) satisfying $y^{I_k}(T,x)=y^1(x)$ for $x \in I_k$ (whose existence is guaranteed by \eqref{time for omegahat}).
Outside the previous domains, we simply set $\yout=0$.

\item
$\yin \in C^0([0,T];L^2(0,1)^n)$ is the solution controlled over the whole domain constructed in the first step of the proof (i.e. $\yin$ is $y$ defined in \eqref{def y omega full}).
\end{itemize}

As before, $y$ satisfies the boundary conditions \eqref{syst:BC}, the initial condition $y(0,\cdot)=y^0$, the final condition $y(T,\cdot)=y^1$ and the control $u$ is then formally defined by \eqref{def cont formal}.
More rigorously,
$$u(t,x)=\xi'(x)\Lambda(x) (\yout(t,x)-\yin(t,x))+(1-\xi(x)) \uin(t,x),$$
where $\uin$ is the control associated with $\yin$.
Clearly, $\supp u \subset [0,T] \times \omega$.
Finally, we can check again that $y$ defined by \eqref{def y} is indeed the weak solution to system \eqref{syst} associated with this $u$.

\end{enumerate}
\end{proof}

\begin{remark}
The second step in the proof above is inspired by the proof of \cite[Theorem 2.2]{AKBGBdT11-MCRF} on the equivalence between internal and boundary controllability for the heat equation.
However, whereas in this parabolic setting the choice of $\omegahat$ has no influence on the controllability properties, it is not the case in our hyperbolic setting and we must be more subtle as this choice may affect the control time.
In the context of hyperbolic systems, it was used in the proof of \cite[Proposition 3.2]{ABCO17} (see also \cite{ZLR16}).
The proof of \cite[Theorem 2.1]{LLQ24} is also based on this idea, it is the construction of $\yout$ which is slightly different from \cite{ABCO17}, because of different boundary conditions.
\end{remark}

We conclude this section with the proof of the technical lemma.

\begin{proof}[Proof of Lemma \ref{lem omegahat}]
\begin{enumerate}
\item
Let us denote by \modif{$\taumaxpeps=\taumax+\epsilon/2$.}
Then $\taumaxpeps>0$ and, using Property \ref{tau prop 1}, we see that we can split the interval $[0,1]$ as follows:
$$
[0,1]=\bigcup_{k=0}^N [a_k,a_{k+1}], \quad \Tinfbc(a_k,a_{k+1}) \leq \taumaxpeps,
$$
for some partition $0=a_0<a_1<\ldots<a_N<a_{N+1}=1$.

\item
Let us denote by $K$ the set of indices $0 \leq k \leq N$ such that
$$
\omega \cap (a_k,a_{k+1}) \neq \emptyset.
$$
Obviously, $K$ is not empty since $\omega$ is not either.
For $k \in K$, we define
$$
a_k^+=\inf \omega \cap (a_k,a_{k+1}), \quad a_{k+1}^-=\sup \omega \cap (a_k,a_{k+1}).
$$
Note that $a_k^+<a_{k+1}^-$ since $\omega \cap (a_k,a_{k+1})$ is open.
In the sequel, we denote by $K=\ens{k_1,\ldots,k_r}$ with $k_1<\ldots<k_r$.

\item
By very definition, if $(0,a_{k_1}^+)$ is not empty, then it is a connected component of $\clos{\omega}^c$ and it follows that
$$\Tinfbc(0,a_{k_1}^+) \leq \taumax \leq \taumaxpeps.$$
By Property \ref{tau prop 2}, there exists $\delta_0>0$ such that $(0,a_{k_1}^+ +\delta_0) \subset (0,1)$ with
\begin{equation}\label{estim near 0}
\Tinfbc(0, a_{k_1}^+ +\delta_0) \leq \taumaxpeps +\frac{\epsilon}{2}.
\end{equation}
Similarly, if $(a_{k_r+1}^-,1)$ is not empty, then there exists $\delta_r>0$ such that $(a_{k_r+1}^- -\delta_r, 1) \subset (0,1)$ with
\begin{equation}\label{estim near 1}
\Tinfbc(a_{k_r+1}^- -\delta_r, 1) \leq \taumaxpeps +\frac{\epsilon}{2},
\end{equation}
and, if $r \geq 2$, $1 \leq l \leq r-1$ and $(a_{k_l+1}^-, a_{k_{l+1}}^+)$ is not empty, then there exists $\delta_l>0$ such that $(a_{k_l+1}^- -\delta_l, a_{k_{l+1}}^+ +\delta_l) \subset (0,1)$ with
\begin{equation}\label{estim near ak+1}
\Tinfbc(a_{k_l+1}^- -\delta_l, a_{k_{l+1}}^+ +\delta_l) \leq \taumaxpeps +\frac{\epsilon}{2}.
\end{equation}
Let us now define
$$
\delta=\min_{\substack{0 \leq l \leq r \\ 1 \leq \ell \leq r}} \ens{\delta_l, \frac{a_{k_\ell+1}^- - a_{k_\ell}^+}{2}},
$$
where it is understood that $\delta_l$ is simply omitted if it does not exist.
Then, all the previous properties \eqref{estim near 0}, \eqref{estim near 1} and \eqref{estim near ak+1} remain true with $\delta$ instead of $\delta_l$, thanks to Property \ref{tau prop 3}.

\item
For $k \in K$, by definition of $a_k^+$, we have
$$
\omega \cap (a_k^+, a_k^+ +\delta) \neq \emptyset.
$$
Therefore, there exists an open interval
$$J_k^+ \subset\subset \omega \cap (a_k^+, a_k^+ +\delta).$$
Similarly, there exists an open interval $J_{k+1}^- \subset\subset \omega \cap (a_{k+1}^--\delta, a_{k+1}^-)$.
Let us now define
$$
\omegahat=
\bigcup_{k \in K} J_k^+ \cup J_{k+1}^-.
$$

\item
By construction, the connected component $I$ of $\clos{\omegahat}^c$ located between $J_k^+$ and $J_{k+1}^-$ is included in $(a_k,a_{k+1})$ and thus, by Property \ref{tau prop 3} and definition of $a_k$, satisfies
$$\Tinfbc(I) \leq \Tinfbc(a_k,a_{k+1}) \leq \taumaxpeps.$$
On the other hand, the connected component $I$ of $\clos{\omegahat}^c$ located between $J_{k_l+1}^-$ and $J_{k_{l+1}}^+$ (when $r \geq 2$) is included in $(a_{k_l+1}^- -\delta,a_{k_{l+1}}^+ +\delta)$ and thus, by Property \ref{tau prop 3} and estimate \eqref{estim near ak+1}, satisfies
$$
\Tinfbc(I) \leq \Tinfbc(a_{k_l+1}^- -\delta,a_{k_{l+1}}^+ +\delta) \leq \taumaxpeps+\frac{\epsilon}{2}.
$$
Finally, for the connected component $I$ located before $J_{k_1}^+$, we distinguish two cases.
If $(0,a_{k_1}^+)$ is empty, then $I$ is included in $(0,a_1)$ and we obtain that $\Tinfbc(I) \leq \taumaxpeps$ as before.
If $(0,a_{k_1}^+)$ is not empty, then $I$ is included in $(0,a_{k_1}^+ +\delta)$ and we obtain that $\Tinfbc(I) \leq \taumaxpeps+\epsilon/2$ as before.
A similar argument applies to the connected component located after $J_{k_r+1}^-$.

\end{enumerate}

\end{proof}

\section{Invertibility of the boundary coupling matrices}\label{sect nec cond}

In this part, we prove that condition \eqref{rank Q} is necessary for system \eqref{syst} to be exactly controllable in some time.
To this end, it is equivalent to show that $Q_0,Q_1$ must both be of full row rank, i.e.
$$\rank Q_0=p, \quad \rank Q_1=m,$$
\modif{(which implies in particular that $p=m$).}

\begin{proof}[Proof of Theorem \ref{main thm}, item \ref{main thm NC}]
\begin{enumerate}
\item
Let us first make some preliminary observations that simplify the problem.
It is clear that we can assume that $\omega=(0,1)$ and take any particular $M$ since the right-hand side in system \eqref{syst} can then be considered as a new control.
We will consider $M=-\Lambda'$ because it simplifies the computations on the so-called adjoint system introduced below.
\modif{In other words, the controllability of system \eqref{syst} implies the controllability of the following system:}
\begin{equation}\label{syst part M}
\begin{dcases}
\pt{y}(t,x)+\Lambda(x) \px{y}(t,x)=-\Lambda'(x)y(t,x)+u(t,x), \\
y_-(t,1)=Q_1y_+(t,1), \quad y_+(t,0)=Q_0 y_-(t,0), \\
y(0,x)=y^0(x),
\end{dcases}
\quad t \in (0,T), \, x \in (0,1).
\end{equation}

It is also clear that we can consider $T$ as large as we want, and therefore assume at least that $T \geq \max\ens{T_1, T_n}$.

\item
We recall that, for system \eqref{syst part M} to be exactly controllable in time $T$, it is necessary (and sufficient) that the following so-called observability inequality holds (see e.g. \cite[Theorem 2.42]{Cor07}):
\begin{equation}\label{obs ineq}
\exists C>0, \quad \norm{z^1}_{L^2(0,1)^n}^2 \leq C \int_0^T \norm{z(t,\cdot)}_{L^2(0,1)^n}^2 \, dt, \quad \forall z^1 \in L^2(0,1)^n,
\end{equation}
where $z \in C^0([0,T];L^2(0,1)^n)$ is the solution to the adjoint system to \eqref{syst part M}, which in our case is
\begin{equation}\label{adj syst}
\begin{dcases}
\pt{z}(t,x)+\Lambda(x) \px{z}(t,x)=0, \\
z_-(t,0)=R_0^* z_+(t,0), \quad z_+(t,1)=R_1^* z_-(t,1), \\
z(T,x)=z^1(x),
\end{dcases}
\quad t \in (0,T), \, x \in (0,1),
\end{equation}
with $R_0=-\Lambda_+(0)Q_0 \Lambda_-(0)^{-1}$ and $R_1=-\Lambda_-(1)Q_1 \Lambda_+(1)^{-1}$, where $\Lambda_-=\diag(\lambda_1,\ldots,\lambda_m)$ and $\Lambda_+=\diag(\lambda_{m+1},\ldots,\lambda_{m+p})$.
We will disprove the above observability inequality if one of the matrices $Q_0$ or $Q_1$ is not full row rank.
We consider only the case $\rank Q_0<p$, the other one being similar.
To disprove inequality \eqref{obs ineq}, we define a sequence of final data $(z^{1,\nu})_{\nu \geq 1}$ as follows.
Since $\rank Q_0<p$, there exists a nonzero $\eta \in \R^p$ with
$$R_0^* \eta=0.$$
We then take $z^{1,\nu}_-=0$ and, for every $1 \leq j \leq p$,
$$
z^{1,\nu}_{m+j}(x)=
\begin{dcases}
g^\nu(\phi_{m+j}(x)) \eta_j & \text{ if } 0<x<\phi_{m+j}^{-1}(T_n), \\
0 & \text{ otherwise, }
\end{dcases}
$$
where $g^\nu:[0,T_n] \to \R$ will be determined below and
$$
\phi_{m+j}(x)=\int_0^x \frac{1}{\lambda_{m+j}(\xi)} \, d\xi.
$$
Note that $T_n \leq \phi_{m+j}(1)=T_{m+j}$ (see Section \ref{sect prob desc}).
For clarity, we will temporarily drop the dependence in $\nu$ below.

\item
We will first establish that
$$z_-=0.$$
To this end, it will be convenient to consider negative values for the time parameter $t$ (note that, in fact, the solution to the adjoint system \eqref{adj syst} belongs to $C^0((-\infty,T];L^2(0,1)^n)$).
Following the characteristics, it is equivalent to show that $z_-(\cdot,1)=0$.
We will argue by induction and prove that, for every $k \in \N$, we have
\begin{equation}\label{hyp induc}
z_i(t,1)=0, \quad \modif{\text{ for a.e. }} T-T_i-k(T_1+T_n)<t<T,
\end{equation}
for every $1 \leq i \leq m$.
For $k=0$, this is satisfied by very definition of $z^1_-=0$.
Assume now that this holds for some $k \geq 0$.
We first note that, by very definition of $z^1_+$, we have
\begin{align}
z_+(t,0) &=g(T-t) \eta, & & \modif{\text{ for a.e. }} T-T_n<t<T, & \nonumber
\\
z_{m+j}(t,0) &=0, & & \modif{\text{ for a.e. }} T-T_{m+j}<t<T-T_n, & \label{zmpj zero init}
\end{align}
for every $1 \leq j \leq p$.
The first condition and the boundary condition $z_-(t,0)=R_0^* z_+(t,0)$ then imply
\begin{equation}\label{zm zero first int}
z_-(t,0)=0, \quad \modif{\text{ for a.e. }} T-T_n<t<T.
\end{equation}
After these observations, let us now prove \eqref{hyp induc} with $k+1$ instead of $k$.
It follows in particular from our assumption that
$$z_-(t,1)=0, \quad \modif{\text{ for a.e. }} T-T_1-k(T_1+T_n)<t<T.$$
Using the boundary condition $z_+(t,1)=R_1^* z_-(t,1)$, we deduce that
$$z_+(t,1)=0, \quad \modif{\text{ for a.e. }} T-T_1-k(T_1+T_n)<t<T.$$
Following the characteristics of $z_+$, we obtain
$$z_{m+j}(t,0)=0, \quad \modif{\text{ for a.e. }} T-T_1-k(T_1+T_n)-T_{m+j}<t<T-T_{m+j},$$
for every $1 \leq j \leq p$.
Combined with \eqref{zmpj zero init}, this gives in particular
$$
z_+(t,0)=0, \quad \modif{\text{ for a.e. }} T-T_1-k(T_1+T_n)-T_n<t<T-T_n.
$$
The boundary condition $z_-(t,0)=R_0^* z_+(t,0)$ and \eqref{zm zero first int} then give
$$
z_-(t,0)=0, \quad \modif{\text{ for a.e. }} T-(k+1)(T_1+T_n)<t<T.
$$
Following the characteristics of $z_-$ until they touch $x=1$, and using \eqref{hyp induc} for $k=0$, we obtain \eqref{hyp induc} with $k+1$ instead of $k$.
This proves the induction.

\item
Since now $z_-=0$, we only have to consider the system satisfied by $z_+$.
The $j$-th component of the solution $z_+$ is also now given by
$$
z_{m+j}(t,x)=
\begin{dcases}
z^1_{m+j}(\phi_{m+j}^{-1}(T-t+\phi_{m+j}(x))) & \text{ if } t-\phi_{m+j}(x)+T_{m+j}>T, \\
0 & \text{ if } t-\phi_{m+j}(x)+T_{m+j}<T.
\end{dcases}
$$

Using that $z^1_{m+j}(x)$ is zero for $x>\phi_{m+j}^{-1}(T_n)$, we have
$$
\norm{z^1_+}_{L^2(0,1)^p}^2
=
\sum_{j=1}^p \int_0^{\phi_{m+j}^{-1}(T_n)} \abs{z^1_{m+j}(x)}^2 \, dx,
$$
and we can compute (using that $T \geq T_n$)
$$
\int_0^T \norm{z_+(t,\cdot)}_{L^2(0,1)^p}^2 \, dt=
\int_0^{T_n} \sum_{j=1}^p \int_{\phi_{m+j}^{-1}(s)}^{\phi_{m+j}^{-1}(T_n)} \abs{z^1_{m+j}(\xi)}^2 \frac{\lambda_{m+j}(\phi_{m+j}^{-1}(\phi_{m+j}(\xi)-s))}{\lambda_{m+j}(\xi)} \, d\xi ds.
$$
Let us introduce $f:[0,T_n] \to \R$ defined by
\begin{equation}\label{rela f z1}
f(s)=\sum_{j=1}^p \int_{\phi_{m+j}^{-1}(s)}^{\phi_{m+j}^{-1}(T_n)} \abs{z^1_{m+j}(\xi)}^2 \, d\xi.
\end{equation}
Then, the observability inequality \eqref{obs ineq}, the fact that $z_-=0$ and the previous computations, imply that
$$
f(0) \leq CL \int_0^{T_n} f(s) \, ds,
$$
for some $L>0$ depending only on $\Lambda$.
It is clear that such an inequality cannot be true for all nonnegative smooth functions $f:[0,T_n] \to \R$ with $f(T_n)=0$ and $f'<0$.
For instance, it is violated by the sequence $(f^\nu)_{\nu \geq 1}$ defined by
$$
f^\nu(s)=\left(\frac{T_n-s}{T_n}\right)^\nu, \quad 0 \leq s \leq T_n.
$$
To get to the conclusion, we simply ``invert'' \eqref{rela f z1} for such a sequence: we define $g^\nu$ by
$$
g^\nu(t)=
\left(-(f^{\nu})'(t)
\sum_{j \in J}
\frac{1}{\abs{\eta_j}^2 \lambda_{m+j}(\phi_{m+j}^{-1}(t))}
\right)^{\frac{1}{2}},
$$
where $J=\ens{j \st \eta_j \neq 0}$ is nonempty by definition of $\eta$.
Note that $g^\nu \in L^2(0,T_n)$.

\end{enumerate}
\end{proof}

\begin{remark}
Let us mention that the exact controllability of general systems with the identity as control operator has been studied in the literature.
Notably, it was proved in the interesting work \cite{Zwa13} that, in a Hilbert space framework, for such a system to be exactly controllable in some time it is necessary and sufficient that its generator $A$ is the extension of an operator $\bar{A}$ such that $-\bar{A}$ generates a $C_0$-semigroup.
\modif{This condition is not always easy to check in practice though, and this is why we have instead chosen to disprove the observability inequality.}
\end{remark}

\section*{Acknowledgments}

This project was supported by National Natural Science Foundation of China (Nos. 12122110, 12071258 and 12471421) and National Science Centre, Poland UMO-2020/39/D/ST1/01136.
For the purpose of Open Access, the authors have applied a CC-BY public copyright licence to any Author Accepted Manuscript (AAM) version arising from this submission.


\bibliographystyle{amsalpha}
\bibliography{biblio}

\end{document}